\documentclass[11pt]{article}

\usepackage{amsmath, amssymb, amsthm}
\usepackage{mathrsfs}
\usepackage{hyperref}
\usepackage{geometry}
\hypersetup{
    colorlinks=true,
    linkcolor=blue,
    citecolor=blue,
    urlcolor=blue
}

\title{\textbf{Fourier Series Generated by Additive Prime Factor Functions}}

\author{
Dimitris Vartziotis$^{1,2,*}$
}

\date{}

\theoremstyle{plain}
\newtheorem{theorem}{Theorem}[section]
\newtheorem{lemma}[theorem]{Lemma}
\newtheorem{proposition}[theorem]{Proposition}
\newtheorem{definition}[theorem]{Definition}

\begin{document}

\maketitle

\vspace{-1em}

\begin{center}
\footnotesize
$^{1}$ NIKI -- Digital Engineering, Ioannina, Greece\\
$^{2}$ TWT Science \& Innovation, Stuttgart, Germany\\
$^{*}$ Corresponding author: \texttt{dimitris.vartziotis@nikitec.gr}
\end{center}

\vspace{1em}

\begin{abstract}
We introduce a rigorous arithmetic--spectral construction associating planar geometric objects with additive prime-factor statistics. Let $\mathrm{sopfr}(n)$ denote the sum of prime factors of $n$, counted with multiplicity, and define the summatory function
\[
B(x) = \sum_{n \le x} \mathrm{sopfr}(n).
\]
It is known that
\[
B(x) \sim \frac{\pi^2}{12} \frac{x^2}{\log x}
\quad \text{as } x \to \infty.
\]
We show that $B(n)$ admits an exact prime-indexed decomposition
\[
B(n) = \sum_{p \le n} p\, v_p(n!),
\]
where $v_p(n!)$ denotes the $p$-adic valuation of $n!$. This identity motivates the definition of a sparse prime-indexed Fourier series
\[
F_n(t) = \sum_{p \le n} v_p(n!) e^{i p t},
\]
which we investigate from analytic and geometric perspectives. We establish precise norm identities, relate the construction to circulant Hermitian polygon transformations whose eigenpolygons are discrete Fourier modes, and examine the planar geometry arising from sampled curves. All geometric observations are explicitly experimental. The results provide a rigorous arithmetic foundation for prime-related Fourier geometry and motivate further theoretical and experimental investigations.
\end{abstract}

\vspace{1em}

\noindent\textbf{Keywords:} additive arithmetic functions, prime numbers, Fourier series, spectral geometry, fractal geometry

\noindent\textbf{MSC (2020):} Primary 11N37; Secondary 11A25, 42A16, 28A80

\section{Introduction}

The distribution of prime numbers has traditionally been studied through analytic and arithmetic tools such as the prime-counting function $\pi(x)$, the von Mangoldt function, and Dirichlet series. Alongside these classical approaches, alternative representations---spectral, geometric, or dynamical---have been explored to capture structural features of primes not immediately visible in standard formulations.

A natural source of such representations arises from additive arithmetic functions derived from prime factorizations. Among these, the function
\[
\mathrm{sopfr}(n) = \sum_{p^\alpha \parallel n} \alpha p
\]
is completely additive and encodes both magnitude and multiplicity of prime factors. Its summatory function
\[
B(x) = \sum_{n \le x} \mathrm{sopfr}(n)
\]
satisfies the asymptotic relation
\[
B(x) \sim \frac{\pi^2}{12} \frac{x^2}{\log x},
\]
as shown in \cite{VT}.

Independently, geometric frameworks have been developed in which planar polygons are represented as vectors in complex space and transformed via circulant Hermitian matrices. Such matrices are diagonalized by the discrete Fourier transform, and their eigenvectors correspond to discrete Fourier polygons \cite{VW2}.

Sparse Fourier series with arithmetic frequency sets have long been studied in analytic number theory, particularly in exponential sums over primes \cite{HL,Montgomery}. Connections between Fourier series and fractal geometry appear in both deterministic and random settings \cite{Kahane,Strichartz,Falconer}.

The present work lies at the intersection of these themes. We establish an exact prime-indexed decomposition of $B(n)$ involving factorial valuations and use it to define a sparse Fourier series with prime frequencies. The geometric complexity of the associated planar curves is investigated experimentally.

\section{Additive Prime-Factor Functions}

For $n \ge 2$ with prime factorization
\[
n = \prod_p p^{\alpha_p},
\]
define
\[
\mathrm{sopfr}(n) = \sum_p \alpha_p p,
\qquad \mathrm{sopfr}(1)=0.
\]

The function $\mathrm{sopfr}$ is completely additive:
\[
\mathrm{sopfr}(ab) = \mathrm{sopfr}(a) + \mathrm{sopfr}(b),
\quad a,b \in \mathbb{N}.
\]

Define
\[
B(x) = \sum_{n \le x} \mathrm{sopfr}(n).
\]

\begin{theorem}[Vartziotis--Tzavellas \cite{VT}]
As $x \to \infty$,
\[
B(x) \sim \frac{\pi^2}{12} \frac{x^2}{\log x}.
\]
\end{theorem}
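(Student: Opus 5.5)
The plan is to swap the order of summation and reduce everything to sums over primes. By definition of $\mathrm{sopfr}$,
\[
B(x)=\sum_{n\le x}\sum_{p^k\mid n,\,k\ge1} p=\sum_{p^k\le x} p\left\lfloor \frac{x}{p^k}\right\rfloor .
\]
This already gives the factorial-valuation form $\sum_{p\le x} p\,v_p(\lfloor x\rfloor!)$ via Legendre's formula. I would split the sum into the term $k=1$ and the terms $k\ge2$.

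For $k\ge 2$ the contribution is crude. It is at most
\[
\sum_{k\ge2}\sum_{p\le x^{1/k}} p\cdot \frac{x}{p^k}\le x\sum_{p\le \sqrt x} \frac{1}{p}\cdot\frac{1}{1-1/p}\ll x\log\log x ,
\]
which is $o(x^2/\log x)$.

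The main term is $S(x)=\sum_{p\le x} p\lfloor x/p\rfloor$. I would group the primes by the value $m=\lfloor x/p\rfloor$, so that
\[
S(x)=\sum_{m\ge1} m\sum_{x/(m+1)<p\le x/m} p .
\]
The key input is the prime number theorem in the form $\sum_{p\le y}p=\theta_1(y)\sim y^2/(2\log y)$, which follows by partial summation from $\pi(y)\sim y/\log y$.

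Fix a large cutoff $M$. For each $m\le M$ we have $\log(x/m)\sim\log x$. The inner sum is then
\[
\frac{x^2}{2\log x}\left(\frac1{m^2}-\frac1{(m+1)^2}\right)(1+o(1)).
\]
Summing $m\bigl(m^{-2}-(m+1)^{-2}\bigr)$ over $m\le M$ telescopes, via Abel summation, to $\sum_{m\le M}1/m^2 - M/(M+1)^2$. This tends to $\zeta(2)=\pi^2/6$ as $M\to\infty$. Multiplying by the factor $\frac12$ gives the constant $\pi^2/12$.

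The main obstacle is controlling the tail $m>M$, that is the primes $p< x/M$. Here $\log(x/m)$ is no longer comparable to $\log x$, so uniformity is lost. I would bound this part directly by
\[
\sum_{p\le x/M} p\cdot\frac{x}{p}=x\,\pi(x/M)\ll \frac{x^2}{M\log(x/M)}\ll\frac{x^2}{M\log x}
\]
for $M$ fixed and $x$ large. This is $O(1/M)$ relative to the main term. Letting $x\to\infty$ first and then $M\to\infty$ gives $S(x)\sim\frac{\pi^2}{12}\,x^2/\log x$. Combined with the bound on the $k\ge2$ terms, this proves the theorem.
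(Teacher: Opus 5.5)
The paper gives no proof of this theorem. It is imported from \cite{VT}; the asymptotic is in fact classical, due to Alladi and Erd\H{o}s (1977). The only related material in the paper is the Lemma of Section~3, whose identity $B(n)=\sum_{p\le n}p\,v_p(n!)$ is exactly your opening step.

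Your argument is a correct and self-contained derivation from the prime number theorem. The $k\ge2$ terms are indeed $\ll x\log\log x$, the grouping by $m=\lfloor x/p\rfloor$ and the Abel summation are right, and the constant comes out as $\frac12\zeta(2)$. Two details are worth making explicit:
\begin{itemize}
\item The tail $m>M$ consists of the primes $p\le x/(M+1)$, not $p<x/M$. This is harmless, since the former set is contained in the latter.
\item More importantly, the limiting argument needs the implied constant in $x\,\pi(x/M)\ll x^2/(M\log x)$ to be independent of $M$. This holds because Chebyshev's bound $\pi(y)\le Cy/\log y$ has an absolute constant, and $\log(x/M)\ge\frac12\log x$ once $x\ge M^2$.
\end{itemize}
With $c_M=\sum_{m\le M}m^{-2}-M/(M+1)^2$ you then get $\frac12c_M\le\liminf_{x\to\infty}B(x)\log x/x^2\le\limsup_{x\to\infty}B(x)\log x/x^2\le\frac12c_M+O(1/M)$, with an absolute implied constant. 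Letting $M\to\infty$ finishes the proof.

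Compared with the paper, your route shows what its Lemma actually buys. In $v_p(n!)$ only the first Legendre term $\lfloor n/p\rfloor$ matters, and the constant $\pi^2/12$ is produced by the large primes $p\asymp x/m$ with $m$ bounded.

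A slightly shorter variant uses $\lfloor x/p\rfloor=\#\{m\ge1: p\le x/m\}$ to write $\sum_{p\le x}p\lfloor x/p\rfloor=\sum_{m\le x}\sum_{p\le x/m}p$. This produces $\sum_m m^{-2}$ directly, without the telescoping.

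Because your cutoff $M$ is fixed, you obtain only the relation $\sim$. An explicit remainder term would require letting $M$ grow with $x$ and using the prime number theorem with an error term.
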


\section{Exact Prime Decomposition via Factorial Valuations}

\begin{lemma}
For every $n \in \mathbb{N}$,
\[
B(n) = \sum_{p \le n} p\, v_p(n!),
\]
where
\[
v_p(n!) = \sum_{k \ge 1} \left\lfloor \frac{n}{p^k} \right\rfloor.
\]
\end{lemma}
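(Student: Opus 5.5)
The plan is to use complete additivity of $\mathrm{sopfr}$ to turn the sum over $n$ into the value of $\mathrm{sopfr}$ at a single product, namely $n!$. Then we read off that value from the prime factorization of $n!$, whose exponents are given by Legendre's formula.

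\textbf{Step 1: collapse the sum.} Since $\mathrm{sopfr}(ab)=\mathrm{sopfr}(a)+\mathrm{sopfr}(b)$ for all $a,b\in\mathbb{N}$ and $\mathrm{sopfr}(1)=0$, induction on $n$ gives
\[
B(n)=\sum_{m=1}^{n}\mathrm{sopfr}(m)=\mathrm{sopfr}\Bigl(\prod_{m=1}^{n} m\Bigr)=\mathrm{sopfr}(n!).
\]
The base case is $B(1)=0=\mathrm{sopfr}(1!)$. The inductive step is $B(n)=B(n-1)+\mathrm{sopfr}(n)=\mathrm{sopfr}((n-1)!)+\mathrm{sopfr}(n)=\mathrm{sopfr}(n!)$.

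\textbf{Step 2: evaluate $\mathrm{sopfr}(n!)$.} Write $n!=\prod_p p^{v_p(n!)}$. By the definition of $\mathrm{sopfr}$, this gives $\mathrm{sopfr}(n!)=\sum_p p\,v_p(n!)$. A prime $p>n$ divides none of $1,\dots,n$, so $v_p(n!)=0$ for such $p$. Hence the sum restricts to $p\le n$, which is the claimed identity.

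\textbf{Step 3: Legendre's formula.} It remains to justify $v_p(n!)=\sum_{k\ge1}\lfloor n/p^k\rfloor$. By complete additivity of $v_p$ we have $v_p(n!)=\sum_{m\le n}v_p(m)$. Next write
\[
v_p(m)=\sum_{k\ge1}\mathbf{1}[p^k\mid m],
\]
and swap the two finite sums; only $k$ with $p^k\le n$ contribute. The number of multiples of $p^k$ in $\{1,\dots,n\}$ is $\lfloor n/p^k\rfloor$, which gives the formula.

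There is no real obstacle here. The only points needing care are the trivial case $n=1$, where both sides are empty sums or zero, and the legitimacy of interchanging the sums, which holds because for fixed $n$ only finitely many terms are nonzero.
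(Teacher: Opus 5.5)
Your proof is correct and is essentially the paper's argument: the paper expands each $\mathrm{sopfr}(m)$ as $\sum_p p\,v_p(m)$, swaps the sums, and identifies $\sum_{m\le n} v_p(m)$ with $v_p(n!)$ via Legendre's formula, whereas you first collapse $B(n)$ to $\mathrm{sopfr}(n!)$ and then expand, which is the same computation in a different order. Your Step 3 additionally proves Legendre's formula, which the paper simply cites.
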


\begin{proof}
Using complete additivity,
\[
B(n) = \sum_{m \le n} \sum_p p\, v_p(m)
= \sum_{p \le n} p \sum_{m \le n} v_p(m).
\]
The inner sum equals the exponent of $p$ in
\[
\prod_{m=1}^n m = n!,
\]
which is given by Legendre's formula.
\end{proof}

\section{Prime-Indexed Fourier Series}

\begin{definition}
For $n \in \mathbb{N}$, define
\[
F_n(t) = \sum_{p \le n} v_p(n!) e^{i p t},
\qquad t \in \mathbb{R}.
\]
\end{definition}

\begin{proposition}
For each fixed $n$, $F_n \in L^2([-\pi,\pi])$ and
\[
\|F_n\|_{L^2}^2
=
2\pi \sum_{p \le n} v_p(n!)^2.
\]
\end{proposition}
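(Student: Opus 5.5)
The plan is a direct computation, since for fixed $n$ the function $F_n$ is a trigonometric polynomial with finitely many terms. First I would note that $F_n$ is a finite linear combination of the continuous functions $t\mapsto e^{ipt}$, indexed by the primes $p\le n$, with coefficients $v_p(n!)$. These coefficients are nonnegative integers, finite by Legendre's formula as recalled in the Lemma. Hence $F_n$ is continuous and bounded on the compact interval $[-\pi,\pi]$, and so $F_n\in L^2([-\pi,\pi])$.

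For the norm identity I would expand $|F_n(t)|^2 = F_n(t)\overline{F_n(t)}$ as a finite double sum:
\[
|F_n(t)|^2 = \sum_{p\le n}\sum_{q\le n} v_p(n!)\,v_q(n!)\,e^{i(p-q)t},
\]
using that the coefficients are real. I would then integrate term by term over $[-\pi,\pi]$, which is legitimate because the sum is finite. The key input is the orthogonality relation
\[
\int_{-\pi}^{\pi} e^{ikt}\,dt = 2\pi\,\delta_{k,0}, \qquad k\in\mathbb{Z}.
\]
This applies with $k=p-q$, which is an integer because primes are integers. Only the diagonal terms $p=q$ survive, and this gives $\|F_n\|_{L^2}^2 = 2\pi\sum_{p\le n} v_p(n!)^2$.

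Alternatively, the identity is Parseval's theorem for the orthogonal system $\{e^{ikt}\}$ on $[-\pi,\pi]$. Here the Fourier coefficients of $F_n$ vanish off the primes $\le n$, and the $L^2$ norm is taken without the $1/(2\pi)$ normalization.

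There is no real obstacle. The only points to state explicitly are:
\begin{itemize}
\item the frequencies $p$ are distinct integers, which is what makes orthogonality apply;
\item the $L^2$ norm is unnormalized, which accounts for the factor $2\pi$;
\item for $n=1$ the sum is empty, so both sides are $0$.
\end{itemize}
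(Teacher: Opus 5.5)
Your proposal is correct and takes the same approach as the paper: orthogonality of $\{e^{ikt}\}$ on $[-\pi,\pi]$, together with the fact that distinct primes give distinct integer frequencies, so all cross terms vanish. Your version also spells out things the paper leaves implicit: that $F_n$ is a trigonometric polynomial and hence in $L^2$, the double-sum expansion of $|F_n|^2$, where the factor $2\pi$ comes from, and the empty case $n=1$.
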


\begin{proof}
The functions $\{e^{ikt}\}_{k \in \mathbb{Z}}$ form an orthogonal basis of $L^2([-\pi,\pi])$. Since distinct primes correspond to distinct frequencies, all cross terms vanish.
\end{proof}

\section{Connection to Circulant Hermitian Polygon Transformations}

Planar polygons with $N$ vertices may be represented as vectors in $\mathbb{C}^N$. Circulant Hermitian matrices acting on such vectors are diagonalized by the discrete Fourier transform, and their eigenvectors correspond to discrete Fourier polygons \cite{VW2}.

The series $F_n$ may be interpreted as a continuous analogue of a linear combination of Fourier polygons with prime-indexed coefficients. This establishes a direct link between additive number theory and spectral polygon geometry.

\section{Numerical Geometry (Experimental Observations)}

Sampling $F_n(t)$ on dense grids produces planar curves in the complex plane. These curves exhibit visually rich structure with apparent repetition across scales.

Geometric complexity is quantified using box-counting procedures \cite{Falconer,Tricot}. No claim is made regarding Hausdorff or Minkowski dimensions; all such observations are experimental.

\section{Main Conclusions}

\begin{itemize}
\item The summatory function of $\mathrm{sopfr}(n)$ admits an exact prime-indexed decomposition.
\item This decomposition defines a sparse prime-indexed Fourier series with controlled analytic properties.
\item The construction provides a rigorous arithmetic foundation for prime-related spectral geometry.
\item The associated planar curves exhibit experimentally observed multiscale structure.
\end{itemize}

\end{document}